\numberwithin{equation}{section}
\theoremstyle{plain}
\newtheorem{theorem}[equation]{Theorem}
\newtheorem{lemma}[equation]{Lemma}
\theoremstyle{definition}
\newtheorem{definition}[equation]{Definition}
\theoremstyle{remark}
\newtheorem{remark}[equation]{Remark}
\newcommand{\ul}{\underline}
\newcommand{\cok}{\mathrm{cok}}
\def\C{\mathcal{C}}
\def\M{\mathcal{M}}
\def\H{\mathcal{H}}
\def\E{\mathcal{E}}
\def\I{\mathcal{I}}
\def\A{\mathcal{A}}
\def\F{\mathcal{F}}
\def\W{\mathcal{W}}
\begin{document}

\title [\tiny{A note on model structures on arbitrary Frobenius categories}]{A note on model structures on arbitrary Frobenius categories}
\author [\tiny{Zhi-Wei Li}] {Zhi-Wei Li}

\date{\today}
\thanks{The author was supported by National Natural Science Foundation
of China (No.s 11671174 and 11571329).}

\subjclass[2010]{18E10, 18E30, 18E35}
\date{\today}
\keywords{Frobenius categories; triangulated categories; model structures}%


\maketitle
\begin{center}
{\tiny{{\it Zhi-Wei Li}, School of Mathematics and Statistics, Jiangsu Normal University, \\ 101 Shanghai Road,
Xuzhou 221116, Jiangsu, P. R. China, e-mail:zhiweili@jsnu.edu.cn}}
\end{center}
\begin{abstract}  We show that there is a model structure in the sense of Quillen on an arbitrary Frobenius category $\F$ such that the homotopy category of this model structure is equivalent to the stable category $\underline{\F}$ as triangulated categories. This seems to be well-accepted by experts but we were unable to find a complete proof for which in the literature. When $\F$ is a weakly idempotent complete (i.e. every split monomorphism is an inflation ) Frobenius catgory, the model structure we constructed is an exact (closed) model structure in the sense of \cite[J. Pure Appl. Algebra, 215(2011), 2892-2902]{Gillespie11}.

\end{abstract}

\setcounter{tocdepth}{1}

\section{Introduction}
It is well known that stable categories of Frobenius categories \cite{Happel88}, as well as certain homotopy categories of Quillen model structures \cite{Quillen67}, are two important methods for constructing triangulated categories. This note is aimed to make it clear that the first method can always be recovered from the second.

Recall that an {\it exact category} in the sense of Quillen \cite{Quillen73} is a pair $(\A, \E)$ in which $\A$ is an additive category and $\E$ is a class of {\it kernel-cokernel sequences} in $\A$. Recall that a sequence $A\stackrel{i}\to B\stackrel{d}\to C $ is a kernel-cokernel sequence if $i=\ker d$ and $d=\cok i$. There is an axiomatic description of an exact category in \cite[Appendix A]{Keller90}. Following \cite{Keller90}, in a kernel-cokernel sequence $X\xrightarrow{f} Y\xrightarrow{g} Z$ in $\E$, the morphism $f$ is called an {\it inflation}, $g$ is called a {\it deflation} and the short exact sequence itself is called a {\it conflation}. We will use $\mathrm{ Inf}(\F)$ and $\mathrm{ Def}(\F)$ to denote the class of inflations and deflations of $\F$ respectively.

 An exact category $(\F, \E)$ is called a {\it Frobenius} category if $\F$ has enough projective objects (relative to $\E$) and injective objects (relative to $\E$), and the projective objects coincide with the injective ones. We use $\I$ to denote the subcategory of injective-projective objects of $\F$. Note that $\I$ is closed under direct summands by \cite[Corollary 11.6]{Buhler10}. Recall that given two morphisms $f, g\colon X\to Y$ in $\F$, $f$ is said to be {\it stably equivalent} to $g$, written $f\sim g$, if $f-g$ factors through some object in $\I$.  Denote by $\underline{\F}$ the stable category of $\F$ whose objects are objects in $\F$ and whose morphisms are stable equivalence classes of morphisms in $\F$. It has a well known triangulated structure as shown in \cite[Theorem 2.6]{Happel88}.

In a Frobenius category $\F$, a morphism $f$ is called a {\it stable equivalence} if it is an isomorphism in the stable category $\ul{\F}$. We define the following three classes of morphisms in $\F$:
\begin{equation*}  \C of(\F)=\mathrm{ Inf}(\F), \ \F ib(\F)=\mathrm{ Def}(\F), \ \W e(\F)=\{stable\ equivalences\}
\end{equation*}
The following result shows that $\M_\F=(\C of(\F), \F ib(\F), \W e(\F))$ is a classical model structure (that is, not necessarily  a {\it closed model structure}) on $\F$ in the sense of \cite[Section I.1, Page 1.1, Definition 1]{Quillen67} (see Definition \ref{defn:model} for details). This is inspired by \cite[Theorem 2.2.12]{Hovey99}, \cite[Theorem 2.6]{Hovey02}, \cite[Corollary 3.4]{Gillespie11} and \cite[Proposition 4.1]{Gillespie13} for the cases when $\F$ is the module category of a Frobenius ring and a weakly idempotent complete (i.e. every split monomorphism is an inflation) exact category. More important, this result shows also that the associated homotopy category $\H o(\M_\F)$ is equivalent to the stable category $\ul{\F}$ preserving the triangulated structures constructed by Quillen in  of \cite[Section I.2, Page 2.9, Theorem 2]{Quillen67} and Happel in \cite[Theorem 2.6]{Happel88}.
\begin{theorem}\ \label{mainthm}
 Let $\F$ be a Frobenius category. Then $\M_\F$ is a classical model structure on $\F$ and the associated homotopy category $\H o(\M_\F)$ is equivalent to the stable category $\underline{\F}$ as triangulated categories.
\end{theorem}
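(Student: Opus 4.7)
The plan is to verify Quillen's classical axioms for $\M_\F$ and then identify $\Ho(\M_\F)$ with $\ul{\F}$ as triangulated categories, working in four steps.

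\emph{Step 1: Characterising trivial (co)fibrations.} I would first establish that an inflation is a stable equivalence iff its cokernel lies in $\I$, and dually for deflations. If $X\to Y\to C$ is a conflation with $C\in\I$, projectivity of $C$ yields a splitting, so $X\to Y$ is a split monomorphism and an isomorphism in $\ul{\F}$. Conversely, if an inflation $f:X\to Y$ with quotient $q:Y\to C$ is a stable equivalence, then from $qf=0$ and a stable inverse $g$ of $f$ we get $q\sim q\circ fg=0$; invoking Happel's triangulated structure on $\ul{\F}$, the conflation extends to a triangle $X\to Y\to C\to \Sigma X$ in which $f$ is an isomorphism, forcing $C\cong 0$ in $\ul{\F}$. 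Since $\I$ is closed under summands, $1_C$ factoring through some $J\in\I$ forces $C\in\I$.

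\emph{Step 2: Model axioms.} Two-out-of-three and retract closure for $\W e(\F)$ are immediate from the corresponding properties of isomorphisms in $\ul{\F}$. For lifting, Step 1 shows that a trivial cofibration splits as $X\to X\oplus I$ with $I\in\I$, so a square against a fibration $E\to B$ reduces to lifting a single map $I\to B$ through a deflation, which exists by projectivity of $I$; dually for cofibrations against trivial fibrations, using injectivity of the kernel. For factorizations, given $f:X\to Y$ pick an inflation $i:X\to I$ with $I\in\I$; the standard exact-category pushout lemma makes $\binom{f}{-i}:X\to Y\oplus I$ an inflation, and the projection $Y\oplus I\to Y$ is a trivial fibration (its kernel is $I\in\I$), giving the cofibration--trivial-fibration factorization of $f$. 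Dually, pick a deflation $p:P\to Y$ with $P\in\I$; then $X\xrightarrow{(1,0)}X\oplus P$ is a trivial cofibration (the split conflation has cokernel $P\in\I$) and $(f,-p):X\oplus P\to Y$ is a deflation by the pullback-dual lemma, producing the other factorization.

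\emph{Step 3: Identifying the homotopy category.} Every object of $\F$ is both cofibrant and fibrant, so $\Ho(\M_\F)$ is the quotient of $\F$ by the left homotopy relation. A cylinder on $X$ can be taken to be $\mathrm{Cyl}(X):=X\oplus I$ with $i:X\to I$ an inflation into $\I$, endpoint maps $j_0=(1,0)$ and $j_1=(1,i)$, and weak equivalence $\sigma:X\oplus I\to X$ the projection; indeed $(j_0,j_1):X\oplus X\to X\oplus I$ is, up to change of basis, $1_X\oplus i$, hence an inflation. A left homotopy $h=(f,k):X\oplus I\to Y$ between $f$ and $g$ then encodes the equation $g-f=k\circ i$, so it exists iff $g-f$ factors through $I$ via $i$. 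Any factorization of $g-f$ through some $J\in\I$ can be promoted to one through $I$ via $i$ using injectivity of $J$ along the inflation $i$, whence left homotopy coincides with stable equivalence of morphisms and $\Ho(\M_\F)\simeq \ul{\F}$ as additive categories.

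\emph{Step 4: Triangulated structures and the main obstacle.} Quillen's suspension $\Sigma X$ in $\Ho(\M_\F)$ is the homotopy cofiber of $X\to 0$: factoring this map as the inflation $X\to I$ (with $I\in\I$) followed by the trivial fibration $I\to 0$, the pushout yields $\Sigma X=I/X$, coinciding with Happel's suspension. Distinguished triangles in $\Ho(\M_\F)$ come from cofiber sequences $X\to Y\to Y/X\to \Sigma X$ of cofibrations, matching the triangles associated to conflations in Happel's construction. The main obstacle is to verify that Quillen's connecting morphism $Y/X\to\Sigma X$ agrees with Happel's---defined by extending $X\to I$ along the inflation $X\to Y$ using injectivity of $I$ and passing to cokernels---which requires a diagram chase identifying the two pushout/extension constructions and comparing signs. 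A secondary subtlety is that without weak idempotent completeness, $\C of(\F)$ and $\F ib(\F)$ need not be closed under retracts; this is precisely why the theorem is phrased at the level of classical rather than closed model structures.
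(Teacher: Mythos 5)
Your Steps 1--3 are essentially correct and reach the same conclusions as the paper, though by somewhat different routes. In Step 1 you prove the characterisation of trivial (co)fibrations by invoking Happel's pre-existing triangulated structure on $\ul{\F}$ (the conflation becomes a triangle in which $f$ is an isomorphism, hence $C\cong 0$ in $\ul{\F}$, hence $C\in\I$ by retract-closure of $\I$); this is cleaner than the paper's hands-on construction of the auxiliary pullback $J$ and the explicit verification that $J$ and then $\cok i$ are injective, and it is legitimate since Happel's theorem is an independent input. Your factorization via $X\to Y\oplus I\to Y$ (and its dual) is a valid alternative to the paper's pullback-then-reduce-to-the-$X\oplus Y$-case route. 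In Step 3 you use cylinder objects where the paper uses path objects; the two are dual and both correctly show that homotopy coincides with stable equivalence. One minor omission in Step 2: you do not verify Quillen's axiom (M2) of Definition \ref{defn:model} (closure of (co)fibrations under composition and pullback/pushout, and that pullbacks of trivial fibrations and pushouts of trivial cofibrations remain weak equivalences); this is routine given your Step 1 and the exact-category axioms, but it is part of what must be checked.

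The genuine gap is in Step 4. You correctly identify the crux--that one must verify Quillen's boundary morphism in the triangles on $\Ho(\M_\F)$ agrees with Happel's--but you leave it as a "diagram chase \ldots comparing signs" rather than carrying it out. This is exactly the point where the paper does the actual work: it builds the loop functor on $\Ho(\M_\F)$ via the explicit path-object diagram, identifies it with Happel's $\Omega$, writes Quillen's left triangles in the normal form $\Omega(Y)\xrightarrow{-\ul{\xi}_f}\ker f\xrightarrow{\ul{\iota}_f}X\xrightarrow{\ul f}Y$ using the relationship between group actions of $\Omega(Y)$ in $\ul{\F}$ and morphisms $\Omega(Y)\to\ker f$ (see \cite[Theorem 6.2.1, Remark 7.1.3]{Hovey99}), and then invokes \cite[Lemma 2.7]{Happel88} to conclude the two triangulations coincide. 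Without an argument of this kind your proof establishes only an equivalence of additive categories, not the claimed equivalence of triangulated categories, so Step 4 as written is incomplete. Your "secondary subtlety" about retracts is correctly diagnosed and matches the paper's closing remark.
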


\section{The proof of Theorem \ref{mainthm}}

\subsection*{The classical model structures}
We recall the definition of a model structure in the sense of \cite[Section I.1, Page 1.1, Definition 1]{Quillen67}, which is called a {\it classical model structure} here. The reason is that the modern definition of a model structure often corresponds to what Quillen called a {\it closed model structure} \cite{DS95, Hovey99, Hirschhorn03}.

\begin{definition} \label{defn:model}\ A {\it classical Quillen model structure} on an exact category $\A$ is three classes of morphisms of $\A$ called cofibrations, fibrations and weak equivalences, denoted by $\C of$, $\F ib$ and $\W e$ respectively. It requires the following properties.
\vskip5pt

$(\mathrm{ M}1)$ (Lifting property) \ Given a commutative diagram in $\A$:
\[
\xymatrix{
A\ar[r]^{f}\ar[d]_{i} & X\ar[d]^{p}\\
B \ar[r]_{g}\ar@{.>}[ru]^h& Y}
\]
if either $i$ is a cofibration and $p$ is a trivial fibration (i.e., a fibration which is a weak equivalence), or $i$ is a trivial cofibration (i.e., a cofibration which is a weak equivalence) and $p$ is a fibration, then there exists a morphism $h\colon B\to X$ such that $hi=f$ and $ph=g$.

\vskip5pt
$(\mathrm{ M}2)$ \ Fibrations are deflations and closed under composition and pullback. Cofibrations are inflations and closed under composition and pushout. The pullback of a trivial fibration is a weak equivalence, the pushout of a trivial cofibration is a weak equivalence.
\vskip5pt
$(\mathrm{ M}3)$ (Factorization property)  \ Any morphism $f$ in $\A$  can be factored in two ways: (i)\ $f=pi$, where $i$ is a cofibration and  $p$ is a trivial fibration, and (ii)\ $f=pi$, where $i$ is a trivial cofibration and $p$ is a fibration.
\vskip5pt
$(\mathrm{ M}4)$ (Two out of three property)\ If $f, g$ are composable morphisms in $\A$ and if two of the three morphisms $f, g$ and $gf$ are weak equivalences, so is the third.
\end{definition}

Note that by \cite[Section I.5, Page 5.5, Proposition 2]{Quillen67}, a classical Quillen model structure is closed if and only if the classes of fibrations, cofibrations, and weak equivalences are each closed under retracts.

Let $\M=(\C of, \F ib, \W e)$ be a classical model structure on an exact category $\F$. As described in the  beginning of \cite[Section 4]{Gillespie11}, we can construct Quillen's homotopy category of $\M$ without the full assumption that $\F$ has all finite limits and colimits. Recall that an object $A\in \F$ is called {\it cofibrant} if $0\to A\in \mathcal{C} of$, it is called {\it fibrant} if $A\to 0\in \mathcal{F}ib$, and it is called {\it trivial} if $0\to A\in \mathcal{W}e$. Each object of $\F$ which is both cofibrant and fibrant is called {\it bifibrant}, we use $\M_{cf}$ to denote the subcategory of $\F$ consisting of bifibrant objects. A {\it path object} for an object $A\in \F$ is an object $A^I$ of $\F$ together with a factorization of the diagonal map $A\stackrel{s}\to A^I\xrightarrow{(p_0,p_1)} A\prod A $ where $s$ is a trivial cofibration and $(p_0,p_1)$ a fibration such that $p_0s=p_1s=1_A$. Two morphisms $f,g\colon A\to B$ in $\F$ are called {\it right homotopic} if there exists a path object $B^I$ for $B$ and a morphism $H\colon A\to B^I$ such that $f=p_0H$ and $g=p_1H$. In this case, $H$ is called a {\it right homotopy} from $f$ to $g$. If $f$ and $g$ are right homotopic, we denote this by $f\stackrel{r}\sim g$. Dually, one can define the notions of {\it cylinder objects}, {\it left homotopic} $\stackrel{l}\sim$, {\it left homotopies}, respectively. In the subcategory of bifibrant objects, the relations $\stackrel{r}\sim$ and $\stackrel{l}\sim$ coincide and yield an equivalence relation $\stackrel{h}\sim$. The {homotopy category} $\H o(\M)$ of $\M$ is the Gabriel-Zisman localization \cite{Gabriel/Zisman67} of $\F$ with respect to the class of weak equivalences, it is equivalent to the quotient category $\M_{cf}/{\stackrel{h}\sim}$ by Quillen's homotopy category theorem  \cite[Section I.1, Page 1.13, Theorem 1]{Quillen67}.

\subsection*{Classical model structures on Frobenius categories}

Let $\F$ be a Frobenius category. Then we define the classes of cofibrations, fibrations and weak equivalences of $\F$ as follows:
\begin{equation*}
 \C of(\F)=\mathrm{ Inf}(\F), \ \F ib(\F)=\mathrm{ Def}(\F), \ \W e(\F)=\{stable\ equivalences\}.
 \end{equation*} The following lemma characterizes trivial cofibrations and trivial fibrations.

\begin{lemma} \label{lem:trivial} $(\mathrm{i})$ \ An inflation $i$ in $\F$ is a trivial cofibration if and only if $\cok i$ is injective.

$(\mathrm{ii})$ \ A deflation $p$ in $\F$ is a trivial fibration if and only if $\ker p$ is injective.

\end{lemma}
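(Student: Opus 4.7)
The plan is to prove (i) and obtain (ii) by duality, working in the opposite Frobenius category $\F^{\op}$, in which the axioms, the class $\I$, and the stable category are all self-dual while inflations and cokernels interchange with deflations and kernels. So I focus on (i). Let $i\colon A\to B$ be an inflation with chosen deflation $\pi\colon B\to C=\cok i$, giving a conflation $A\xrightarrow{i} B\xrightarrow{\pi} C$.

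First I would treat the implication $C\in\I\Rightarrow i\in\W e(\F)$. Because $\F$ is Frobenius, an injective $C$ is projective, so the deflation $\pi$ admits a section $s\colon C\to B$ and the conflation splits as $B\cong A\oplus C$. In $\ul{\F}$ the summand $C$ is a zero object since $\id_C$ factors through the injective $C$ itself, so $\id_C\sim 0$. The inclusion $i\colon A\to A\oplus C$ therefore descends to an isomorphism in $\ul{\F}$. Concretely, with $s$ as above the map $\id_B-s\pi$ vanishes after composing with $\pi$ and hence factors uniquely through $i=\ker\pi$ as $ip$ for some $p\colon B\to A$; one checks $pi=\id_A$, while $\id_B-ip=s\pi$ factors through the injective $C$, so $\bar p$ inverts $\bar i$ in $\ul{\F}$.

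For the converse $i\in\W e(\F)\Rightarrow C\in\I$, I would invoke Happel's triangulated structure on $\ul{\F}$: the conflation yields a distinguished triangle
\[
A\xrightarrow{\bar i} B\xrightarrow{\bar\pi} C\To \Sigma A
\]
in $\ul{\F}$, and if $\bar i$ is invertible then the third vertex $C$ is a zero object. Hence $\id_C\sim 0$, meaning $\id_C$ factors through some $J\in\I$, which exhibits $C$ as a direct summand of $J$. By \cite[Corollary 11.6]{Buhler10} the class $\I$ is closed under direct summands, so $C$ itself lies in $\I$.

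The main obstacle is this converse direction, which relies on two inputs beyond the additive machinery: Happel's triangulation on $\ul{\F}$, and closure of $\I$ under summands to upgrade ``summand of an injective'' to ``injective''. If one prefers to avoid any appeal to the triangulated structure, the same conclusion can be reached by hand: a stable inverse $j$ of $i$ can be corrected to an honest retraction $p$ by subtracting off a map built from the injectivity of the object through which $ji-\id_A$ factors, which splits the conflation as $B\cong A\oplus C$; then $ip\sim\id_B$ translates into the statement that the complementary idempotent $s\pi$ factors through an injective $J$, whence $\id_C=\pi s$ factors through $J$ and $C\in\I$ by the same summand-closure argument. Part (ii) then follows immediately from (i) applied to $\F^{\op}$.
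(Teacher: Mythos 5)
Your proposal is correct, but for the harder implication (trivial cofibration $\Rightarrow$ injective cokernel) it takes a genuinely different route from the paper. The paper never corrects the stable inverse $j$ to an honest retraction; instead it enlarges $i$ to the inflation $\left(\begin{smallmatrix} i\\ t\end{smallmatrix}\right)\colon X\to Y\oplus I$ (where $1_X-ji=st$ factors through $I\in\I$), compares the two conflations via a pullback square, shows the enlarged conflation splits because $(j,s)\left(\begin{smallmatrix} i\\ t\end{smallmatrix}\right)=1_X$, deduces that its cokernel $J$ is injective, and finally extracts $\cok i$ as a summand of $J$ from the conflation $I\to J\to\cok i$. Your first argument instead quotes Happel's theorem that $\ul{\F}$ is triangulated and reads off that the cone $C$ of an isomorphism is zero; this is legitimate (Happel's result is independent of the model structure being built, so there is no circularity) but imports heavier machinery than the paper uses at this point. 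Your second, ``by hand'' argument is the cleanest of the three: extending $t\colon A\to I$ along the inflation $i$ to get $u$ with $ui=t$, setting $p=j+su$ so that $pi=\id_A$, splitting the conflation, and observing that $\id_C=\pi s$ factors through the injective through which $\id_B-ip=s\pi$ factors. All the small verifications you gesture at do go through (in particular, a retraction of an inflation splits the conflation via $\cok i$, and an object whose identity factors through an injective is injective, which is the summand-closure fact the paper also uses). The easy direction and the reduction of (ii) to (i) by duality match the paper, which is even terser there (``by construction'').
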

\begin{proof}
 We only prove the first statement since the proof of the second one is similar. If $i\colon X\to Y$ is a trivial cofibration, then $i$ has a stable inverse $j\colon Y\to X$. By definition, there is an injective object $I$ and morphisms $t\colon X\to I$ and $s\colon I\to X$ such that $1_X -ji=st$. By \cite[Proposition 2.12]{Buhler10}, the morphism $\left(\begin{smallmatrix}
i\\
t
\end{smallmatrix}\right)\colon X\to Y\oplus I$ is an inflation, so it has a cokernel $J$. Consider the following commutative diagram of conflations:
\[
\xy\xymatrixcolsep{2pc}\xymatrix@C16pt@R16pt{
X \ar[r]^{i} \ar@{=}[d] & Y \ar[r] & \cok i \\
X \ar[r]^-{\left(\begin{smallmatrix}
i\\
t
\end{smallmatrix}\right)}& Y\oplus I \ar[r]^h\ar[u]_{(1,0)} & J  \ar[u]_\eta}
\endxy\]
By the dual of  of \cite[Proposition 2.12]{Buhler10}, the right square of the above diagram is a pullback. Thus $\eta\colon J\to \mathrm{ coker}i$ has a kernel $I$, so it is a deflation by \cite[Proposition 2.15]{Buhler10}. Since the morphisms $i$ and $(1, 0)$ are stable equivalences and the stable equivalences satisfies two out of three property, we know that $\left(\begin{smallmatrix}
i\\
t
\end{smallmatrix}\right)$ is a stable equivalence, and then $h$ factors through an injective object. Since $(j, s)\left(\begin{smallmatrix}
i\\
t
\end{smallmatrix}\right)=1_X$, we know that the second row in the above diagram is split. Thus there is a morphism $m\colon J\to Y\oplus I$ such that $1_J=hm$, and then $1_J$ factors through an injective object. From which we can show that $J$ is an injective object. By the conflation
$$I \to J \xrightarrow{\eta} \cok i$$
 we know that $\cok i$ is a direct summand of $J$, and so is an injective object.

Conversely, if $i$ is an inflation with $\mathrm{ coker}i\in \I$, then it is a trivial cofibration by construction. Thus we have $\C of(\F)\cap \W e(\F)=\{ i\in \mathrm{ Inf}(\F) \ | \ \cok i\in \I\}$.
\end{proof}

\begin{lemma} \label{lem:lifting}\ $(\mathrm{i})$ \ Cofibrations has the lifting property with respect to the trivial fibrations.

 $(\mathrm{ii})$ \ Trivial cofibrations has the lifting property with respective to fibrations.
\end{lemma}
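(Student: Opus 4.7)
The plan is to prove both parts by explicit construction, exploiting the fact that in a Frobenius category the objects of $\I$ are simultaneously injective and projective, so they carry both the extension property along inflations and the lifting property along deflations. By Lemma~\ref{lem:trivial}, a trivial fibration is a deflation with kernel in $\I$, and a trivial cofibration is an inflation with cokernel in $\I$; these two distinguished conflations will provide the splittings that drive both arguments.

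For (i), let $p\colon X\to Y$ be a trivial fibration sitting in a conflation $I\xrightarrow{\iota} X\xrightarrow{p} Y$ with $I\in\I$. Since $I$ is injective, the identity on $I$ extends along $\iota$ to a retraction $r\colon X\to I$, and this produces a splitting $\iota r + sp = \id_X$ for some section $s\colon Y\to X$ of $p$. Given a lifting square with outer maps $f\colon A\to X$, $g\colon B\to Y$ and $pf=gi$, I would extend $rf\colon A\to I$ along the inflation $i$ (again by injectivity of $I$) to some $h_1\colon B\to I$, and then set $h := \iota h_1 + sg$. A short direct calculation yields $hi=f$ and $ph=g$.

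For (ii), let $i\colon A\to B$ be a trivial cofibration sitting in a conflation $A\xrightarrow{i} B\xrightarrow{d} J$ with $J\in\I$. Injectivity of $J$ yields a splitting $ir + sd = \id_B$ with retraction $r\colon B\to A$ and section $s\colon J\to B$. Given the square with $pf=gi$, I would next consider $gs\colon J\to Y$ and \emph{lift} it through the deflation $p$; this is where the Frobenius hypothesis enters, since the lift requires $J$ to be \emph{projective}. Choosing $h_2\colon J\to X$ with $ph_2=gs$, the filler is $h := fr + h_2 d$, and a direct check confirms $hi=f$ and $ph=g$.

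The only subtle point is the projective lifting in (ii): part (i) goes through in any exact category with enough injectives, whereas (ii) genuinely uses that every object of $\I$ is projective as well. This two-sided character of $\I$, which is precisely the Frobenius assumption, is what makes the lifting axiom $(\mathrm{M}1)$ for $\M_\F$ hold; once this is in hand, no further obstruction arises.
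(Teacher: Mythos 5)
Your proof is correct and follows essentially the same route as the paper: both arguments split the distinguished conflation of the (trivial) fibration or cofibration using the injective--projective character of $\I$, and then adjust the naive map $sg$ (resp.\ $fr$) by a correction term constructed from the injective extension (resp.\ projective lifting) property. The paper only writes out part (i) and dismisses (ii) as ``similar''; your version spells out the dual explicitly, which is a welcome addition.

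One small slip worth fixing in part (ii): the splitting $ir+sd=\id_B$ of the conflation $A\xrightarrow{i}B\xrightarrow{d}J$ comes from the \emph{projectivity} of $J$ (lift $\id_J$ through the deflation $d$), not from its injectivity as you write---injectivity of $J$ by itself would not split a conflation ending in $J$. Of course since $J\in\I$ it is both, so the conclusion stands, but the stated justification should be corrected; and this also tightens your closing remark, since then \emph{both} the splitting and the lift of $gs$ in (ii) rest on projectivity of $\I$, while \emph{both} the splitting and the extension of $rf$ in (i) rest on injectivity of $\I$.
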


\begin{proof}  (i) \ Consider the following lifting problem
\[
\xy\xymatrixcolsep{2pc}\xymatrix@C16pt@R16pt{
 A  \ar[r]^f \ar[d]_i & X\ar[d]^{p}\\
 B \ar[r]^g& Y   }
\endxy\]
where $i$ is a cofibration and $p$ a trivial fibration. Then we have a commutative diagram of conflations
 \[\xy\xymatrixcolsep{2pc}\xymatrix@C16pt@R16pt{& A\ar[r]^i\ar[d]_f & B\ar[d]^{g}\ar[r]^d& \cok i \\
\ker p \ar[r]^c& X \ar[r]^p & Y & }
\endxy\]
By Lemma \ref{lem:trivial}, $\ker p \in \I$, thus the second row in the above diagram splits. So there is a morphism $\lambda\colon Y\to X$ such that $p \lambda=1_Y$. Then $p(f-\lambda gi)=pf-p\lambda gi=pf-gi=0$ and therefore there is a morphism $v\colon A\to \ker p$ such that $f-\lambda gi=cv$. For the morphism $v$, since $\ker p$ is in $\I$, there is a morphism $u\colon B\to \ker p$ such that $v=ui$. Thus we have $f=(\lambda g+cu)i$. Let $h=\lambda g+cu$. Then $h$ is the desired lift.
\end{proof}

\begin{lemma} \label{lem:factorization}\ Every morphism $f$ in $\F$ can be factorized as $f=pi=qj$, where $p\in \F ib(\F)$, $i\in \C of(\F)\cap \W e(\F)$, $q\in \F ib(\F)\cap \W e(\F)$, $j\in \C of(\F)$.
\end{lemma}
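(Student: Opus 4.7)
The plan is to produce both factorizations by enlarging either the source or the target of $f$ by a direct summand drawn from $\I$, and then exhibiting the two halves of each factorization as (a) a split conflation with complement in $\I$, which is automatically a trivial (co)fibration by Lemma \ref{lem:trivial}, and (b) a pullback or pushout conflation, whose canonical map is then a deflation or an inflation by \cite[Proposition 2.12]{Buhler10}. This mirrors the classical mapping path and mapping cylinder constructions from topology.

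For $f = pi$ with $i \in \C of(\F)\cap \W e(\F)$ and $p \in \F ib(\F)$, I use that $\F$ has enough projectives and that projectives coincide with $\I$ to choose a deflation $\pi\colon P\to Y$ with $P\in\I$. I then take
\[
X \xrightarrow{\binom{1_X}{0}} X \oplus P \xrightarrow{(f,\,-\pi)} Y.
\]
The first map sits in the split conflation $X\to X\oplus P \to P$, whose cokernel $P$ lies in $\I$, hence it is a trivial cofibration by Lemma \ref{lem:trivial}(i). The second map is the canonical morphism associated with the pullback of $f$ and $\pi$; its kernel is $X\times_Y P$ and by the pullback-conflation half of \cite[Proposition 2.12]{Buhler10} it is a deflation, i.e.\ a fibration. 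The composite equals $f$ by inspection.

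For $f = qj$ with $j\in \C of(\F)$ and $q\in \F ib(\F)\cap \W e(\F)$, I argue dually, using enough injectives to pick an inflation $\iota\colon X\to I$ with $I\in \I$, and then taking
\[
X \xrightarrow{\binom{f}{\iota}} Y \oplus I \xrightarrow{(1_Y,\,0)} Y.
\]
The second map sits in the split conflation $I\to Y\oplus I\to Y$, whose kernel $I$ lies in $\I$, so by Lemma \ref{lem:trivial}(ii) it is a trivial fibration. The first map sits in the pushout conflation $X\to Y\oplus I \to Z$, where $Z$ is the pushout of $\iota$ along $f$, again provided by \cite[Proposition 2.12]{Buhler10}, and is therefore an inflation, i.e.\ a cofibration. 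The composition is once more $f$ by inspection.

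The only step that requires real input is verifying the deflation/inflation status of the two ``diagonal'' morphisms $(f,\,-\pi)$ and $\binom{f}{\iota}$, since $f$ itself is an arbitrary morphism with no exactness properties. Both statements are precisely the pullback and pushout conflation results of \cite[Proposition 2.12]{Buhler10}, and rely crucially on $\pi$ being a deflation and $\iota$ an inflation. Note that weak idempotent completeness of $\F$ is not needed at this stage: the ``split'' halves of each factorization come equipped with explicit direct-sum conflation witnesses, rather than requiring split monomorphisms or epimorphisms to be inflations or deflations.
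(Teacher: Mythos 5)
Your proof is correct, and after unwinding the paper's own argument the two turn out to be the same construction, just organized differently. The paper first treats the special cases where $f$ is an inflation or a deflation (using a pullback along a projective cover of $\cok f$, respectively a pushout along an injective envelope of $\ker f$), and then handles a general $f$ by passing through the intermediate factorization $X\xrightarrow{\binom{1_X}{0}} X\oplus Y\xrightarrow{(f,1_Y)} Y$ and applying the two special cases. If you trace through what this actually produces, the pullback in step one simplifies (because the deflation $X\oplus Y\to Y$ is split) to exactly your $X\to X\oplus P\xrightarrow{(f,\pm\pi)} Y$, and dually for the other factorization. So you have arrived at the same maps by a shorter route: you write down the mapping-path and mapping-cylinder factorizations directly, identify the split half via Lemma \ref{lem:trivial}, and use the pullback/pushout conflation from B\"uhler's Proposition 2.12 for the other half, with no case analysis on whether $f$ is already an inflation or deflation. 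Your closing remark that weak idempotent completeness is not used here is also accurate and worth keeping: the split conflations $X\to X\oplus P\to P$ and $I\to Y\oplus I\to Y$ are conflations in any exact category, so the split monos and epis appearing here come with explicit complements and need no idempotent-completeness hypothesis to be inflations or deflations.
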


\begin{proof} \ If $f:X\to Y$ is an inflation, take $q=\mathrm{ Id}_Y, j=f$. For the construction of $p$ and $i$, note that since $\F$ has enough projective objects, there is a projective object $P$ and a deflation $P\to \cok f$. Then we have the following pullback diagram
\[
\xymatrix{
X \ar[r]^{i} \ar@{=}[d] & Y' \ar[r] \ar[d]_p & P\ar[d]\\
X \ar[r]_{f}& Y\ar[r] & \cok f  }
\]
Since deflations are closed under taking pullbacks, we know that $p\in \F ib(\F)$ and $i\in \C of(\F)$. Since $P\in \I$, the morphism $i$ is also a weak equivalence by Lemma \ref{lem:trivial}. This gives the factorization of $f=pi$.

Dually, we can prove that the claim holds for deflations. Now suppose that $f\colon X\to Y$ is an arbitrary morphism in $\F$. It can be factorized as
$$X\xrightarrow{\left(\begin{smallmatrix}
\mathrm{ Id}_X\\
0
\end{smallmatrix}\right)} X\oplus Y\xrightarrow{(f, \mathrm{ Id}_Y)} Y$$
where $\left(\begin{smallmatrix}
\mathrm{ Id}_X\\
0
\end{smallmatrix}\right)$ is an inflation by Lemma 2.7 of \cite{Buhler10} and $(f, \mathrm{ Id}_Y)$ is a deflation by the dual of  \cite[Proposition 1.12]{Buhler10}. Write $\left(\begin{smallmatrix}
\mathrm{ Id}_X\\
0
\end{smallmatrix}\right)=p'i$ with $p'\in \F ib(\F)$ and $i\in \C of(\F)\cap \W e(\F)$. Then $p=(f, \mathrm{ Id}_Y)p'$ is in $\F ib(\F)$ satisfies $pi=(f, \mathrm{ Id}_Y)p'i=(f, \mathrm{ Id}_Y)\left(\begin{smallmatrix}
\mathrm{ Id}_X\\
0
\end{smallmatrix}\right)=f$. Similarly, write $(f, \mathrm{ Id}_Y)=q j'$ with $q\in \F ib(\F)\cap \W e(\F)$ and $j'\in \C of(\F)$. Let $j=j'\left(\begin{smallmatrix}
\mathrm{ Id}_X\\
0
\end{smallmatrix}\right)$. Then $f=qj$ with $q\in \F ib(\F)\cap \W e(\F)$ and $j\in \C of(\F)$.
\end{proof}

\subsection{The proof of Theorem \ref{mainthm}}\  We fist prove that $\M_\F$ is a classical model structure on $\F$ by verifying the axioms $(\mathrm{ M}0)-(\mathrm{ M}3)$ of Definition \ref{defn:model} one by one. By construction, all the three classes of morphisms $\C of(\F), \F ib(\F)$ and $\W e(\F)$ contain isomorphisms. By Lemma \ref{lem:lifting}, we have $(\mathrm{ M}0)$. $(\mathrm{ M}2)$ follows from Lemma \ref{lem:factorization}. Since stable equivalence satisfies two out of three property, we know that $(\mathrm{ M}3)$ holds. For the proof of $(\mathrm{ M}1)$, note that fibrations are deflations, which are stable under composition and pullback by the axiomatic description of an exact category.  Similarly, we know that cofibrations are stable under composition and pushout. Suppose that we have a pullback diagram
\[
\xymatrix{
 Z' \ar[r] \ar[d]_q & X\ar[d]^p\\
 Z\ar[r] & Y  }
\]
with $p\in \F ib(\F)\cap \W e(\F)$. By Lemma \ref{lem:trivial}, $\ker p\in \I$. By the dual of \cite[ Proposition 2.12]{Buhler10}, $q$ is a deflation with $\ker q=\ker p\in \I$. Thus by Lemma \ref{lem:trivial} again, $q\in \F ib(\F)\cap \W e(\F)$ and in particular it is a weak equivalence. Similarly, we can prove that the pushout of a morphism which is both a cofibration and a weak equivalence is a weak equivalence. Thus $\M_\F$ is a model structure on $\F$.

 With this model structure, every object in $\F$ is bifibrant. For each object $X$ in $\F$, we can choose a conflation $\Omega(X)\stackrel{\iota_X}\rightarrowtail P(X)\stackrel{p_X}\twoheadrightarrow X$ with $P(X)\in \I$ since $\F$ is a Frobenius category. Then $X\oplus P(X)$ is a path object of $X$:
  $$X\xrightarrow{\left(\begin{smallmatrix}
1_X\\
0
\end{smallmatrix}\right)} X\oplus P(X)\xrightarrow{\left(\begin{smallmatrix}
1_X & p_X\\
1_X & 0
\end{smallmatrix}\right)} X\oplus X.$$
  It is straightforward to verify that two morphisms $f, g: X\to Y$ are homotopic if and only if they are stably equivalent. Thus $\H o(\M_\F)$ is equivalent to $\underline{\F}$ by Quillen's homotopy category theorem \cite[Section I.1, Page 1.13, Theorem 1]{Quillen67}. Now we have to prove that the triangle structure on $\ul{\F}$ constructed by Quillen in \cite[Section I.2, Page 2.9, Theorem 2]{Quillen67} coincides with the one constructed by Happel in \cite[Theorem 2.6]{Happel88}. In fact, for each morphism $f:X\to Y$, since $P(X)\in \I$, there is a morphism $x_f$ such that $fp_X=p_Yx_f$, and then there exists a commutative diagram of conflations:
\[\xy\xymatrixcolsep{2pc}\xymatrix@C36pt@R36pt{ \Omega(X)\ar[r]^-{\left(\begin{smallmatrix}
\iota_X\\
0
\end{smallmatrix}\right)}\ar[d]_{\kappa_f} & X\oplus P(X) \ar[r]^-{\left(\begin{smallmatrix}
1_X & p_{X}\\
1_X & 0
\end{smallmatrix}\right)}\ar[d]^{\left(\begin{smallmatrix}
f & 0\\
0 & x_f
\end{smallmatrix}\right)} & X\oplus X \ar[d]^{\left(\begin{smallmatrix}
f & 0\\
0 & f
\end{smallmatrix}\right)}\\
 \Omega(Y)\ar[r]^-{\left(\begin{smallmatrix}
\iota_Y\\
0
\end{smallmatrix}\right)}& Y\oplus P(Y)\ar[r]^-{\left(\begin{smallmatrix}
1_Y & p_{Y}\\
1_Y & 0
\end{smallmatrix}\right)}& Y\oplus Y
}\endxy\]
Recall that the loop functor on $\ul{\F}$ defined by Quillen in \cite[Section I.2, Page 2.9, Theorem 2]{Quillen67}, denoted by $\Omega$, is defined by sending $X$ to $\Omega(X)$ and $\ul{f}$ to $\ul{\kappa}_f$. It coincides with the one defined by Happel in \cite[Theorem 2.6]{Happel88}. This is an autoequivalence of $\ul{\F}$ by \cite[Proposition 2.2]{Happel88}.

Given any fibration $f: X\to Y$ in $\F$, we have a commutative diagram of conflations:
\[
\xymatrix{
 \Omega(Y)\ar[r]^{\iota_Y}\ar[d]_{\xi_f} & P(Y)\ar[d]_{\delta_f} \ar[r]^{p_Y} & Y\ar@{=}[d] \\
 \ker f\ar[r]^{\iota_f} & X \ar[r]^{f} & Y.}
\]
Since $\F$ is an additive category, $\Omega(Y)$ is a group object in $\ul{\F}$ and giving $\ker f$ a group action of $\Omega(Y)$ is equivalent to giving a morphism from $\Omega(Y)$ to $\ker f$. See also \cite[Subsection 1.1]{Becker12}. So the left triangles in $\ul{\F}$ constructed by Quillen are isomorphic to those of the form $$\Omega(X)\stackrel{-\ul{\xi}_f}\to \ker f\stackrel{\ul{\iota}_f}\to X\stackrel{\ul{f}}\to Y,$$
see \cite[Theorem 6.2.1, Remark 7.1.3]{Hovey99} for details. Then by \cite[Lemma 2.7]{Happel88}, we know that this triangle structure coincides with the one constructed by Happel.

\begin{remark}\ For a Frobenius category $\F$, by \cite[Proposition 2.4]{Gillespie11}
or \cite[Proposition 4.1]{Gillespie13}, the classical model structure $\M_\F$ is closed if and only if the underling category $\F$ is weakly idempotent complete. In this case, we get an {\it exact model structure} in the sense of \cite{Gillespie11}\end{remark}

\noindent{\bf Acknowledgements}\ The author would like to thank the referee for reading the paper carefully and for many suggestions on mathematics and English expressions.

\vskip10pt

\vskip 10pt


\begin{thebibliography}{99}
\bibitem{Becker12} \emph{H. Becker}:  Models for singularity categories. Adv. Math.  254(20)(2014), 187-232.Zbl 06284998, MR3161097.
\bibitem{Beligiannis/Reiten07} \emph{A. Beligiannis, I. Reiten}: Homological and homotopical aspects of torsion theories. Mem. Amer. Math. Soc. 188(883), 2007. Zbl 1124.18005, MR2327478.
\bibitem{Brown73} \emph{K. S. Brown}: Abstract homotopy theory and generalized sheaf cohomology. Trans. Amer. Math. Soc. 186(1973),419-458. Zbl 0245.55007, MR0341469
\bibitem{Buhler10} \emph{T. B\"uhler}: Exact categories. Expo. Math. vol. 28, No. 1(2010), 1-69. Zbl 1192.18007, MR260634.
\bibitem{DS95} \emph{W. G. Dwyer, J. Spalinski}: Homotopy theories and model categories. Handbook of algebraic topology (Amsterdam), North-Holland, Amsterdern, 1995, pp. 73-126. Zbl 0824.00017, MR1361885.
\bibitem{Gabriel/Zisman67} \emph{P. Gabriel, M. Zisman}: Calculus of fractions and homotopy theory. Ergeb. der Math. und ihrer Grenzgebiete, Vol 35, Springer-Verlag, New York, 1967. Zbl 0186.56802, MR0210125.
\bibitem{Gillespie11} \emph{J. Gillespie}: Model structures on exact categories. J. Pure Appl. Algebra, 215(2011), 2892-2902. Zbl 1315.18019, MR2811572.
\bibitem{Gillespie13} \emph{J. Gillespie}: Exact model structures and recollements. J. Algebra, 458(2016), 265-306. Zbl 06588435, MR3500779.
\bibitem{Happel88} \emph{D. Happel}: Triangulated categories in the representation theory of finite-dimensional algebras. London Math. Soc. Lecture Note Series, Vol.  119, Cambridge University Press, Cambridge, 1988. Zbl 0635.1617, MR0935124
\bibitem{Hirschhorn03} \emph{P. S. Hirschhorn}: Model categories and their localizations.  Mathematical Surveys and Monographs, Vol. 99, Amer. Math. Soc., 2003. Zbl 1017.55001, MR1944041.
\bibitem{Hovey99} \emph{M. Hovey}: Model categories. Mathematical Surveys and Monographs, vol. 63. Amer. Math. Soc., 1999. Zbl 0909.55001, MR1650134.
\bibitem{Hovey02} \emph{M. Hovey}: Cotorsion pairs, model category structures, and representation theory.  Math. Z. 241(2002), 553-592. Zbl 1016.55010, MR2355778.
\bibitem{Keller90}\emph{B. Keller}: Chain complexes and stable categories. Manscripta Math. 67(1990), No. 4, 379-417. Zbl 0753.18005, MR1052551.
\bibitem{Quillen67} \emph{D. G. Quillen}: Homotopical algebra. Springer LNM43, 1967. Zbl0168.20903, MR0223432.
\bibitem{Quillen73} \emph{D. G. Quillen}: Higher algebraic K-theory I. Springer LNM341, 1973, 85-147. Zbl 0292.18004, MR0338129.
\bibitem{MacLane98} \emph{S. M. Lane}: Categories for the working mathematician.  Springer GTM5, 1971. Zbl 0906.18001, MR1712872.

\end{thebibliography}
\end{document}